\documentclass{article}%
\usepackage{amsmath}
\usepackage{amsfonts}
\usepackage{amssymb}
\usepackage{graphicx}
\usepackage{wrapfig}
\usepackage{float}
\usepackage{xcolor}%
\setcounter{MaxMatrixCols}{30}
\providecommand{\U}[1]{\protect\rule{.1in}{.1in}}
\graphicspath{ {./images/} }
\newtheorem{theorem}{Theorem}

\newtheorem{lemma}[theorem]{Lemma}

\newenvironment{proof}[1][Proof]{\noindent\textbf{#1.} }{\ \rule{0.5em}{0.5em}}
\numberwithin{equation}{section}
\begin{document}

\textbf{ON SOME INTERPOLATION INEQUALITIES  }

\textbf{BETWEEN  H\"{O}LDER AND LEBESGUE'S SPACES}

\bigskip

\textbf{S.P. Degtyarev }

\textbf{spdegt@mail.ru}

\bigskip
\bigskip

\textbf{Moscow Technical University of Communications and Informatics,}

\textbf{Moscow, Russian Federation}

\bigskip

\bigskip

\bigskip

{\small \noindent\textbf{Abstract. } We present a simple proof of some
interpolation inequalities between H\"{o}lder and Lebesgue's spaces. As an
example, to demonstrate the simplicity of their applications to nonlinear PDE,
we give also a simple proof of an a-priory estimate in a smooth H\"{o}lder
class for a solution to some quasilinear parabolic initial boundary value
problem. }

{\small \noindent\textbf{Keywords:} interpolation inequalities, a-priori
estimates, nonlinear PDE }

{\small \medskip}

{\small \noindent\textbf{MSC:} 26D10, 35K59 }

\section{Introduction}

In this paper we deal with some simple interpolation inequalities between
H\"{o}lder and Lebesgue's spaces. The importance of interpolation in the
investigations of different problems for partial differential equations is
widely known, see, for example, \cite{Lun1}. \ The inequalities from this
paper can be used, for example, in some appropriate situations to obtain an
a-priory estimate in H\"{o}lder spaces when an estimate in a particular
Lebesgue's space is already obtained. And the last estimate can very often be
obtained more easily. We give a simple example of an application of the
interpolation inequalities from this paper to an a-priory estimate for a
solutions to some quasilinear parabolic initial boundary value problem.

The subject of interpolation inequalities is so vast that it is impossible to
describe even short history of this question, so we confine ourselves to
refereing books \cite{Lun1}, \cite{6}, \cite{Triebel} and the references
therein. Most of known interpolation inequalities, however, concern with the
interpolation of norms in different homogeneous scales of function spaces,
such as Lebesgue's spaces, Sobolev spaces or H\"{o}lder spaces. At the same
time the author of the present paper is not aware of some aggregate results on
the interpolation of norms between different scales of function spaces. Note,
nevertheless, that the present paper is motivated, in particular, by \cite{7},
where the simplest situation was considered, and where we got the idea of
obtaining such sharp inequalities. Note also that in this paper we consider
for simplicity the known functional spaces, designed for parabolic and
elliptic equations of the second order.

Let's now give several definitions and auxiliary facts. We are going to use
standard spaces $C^{m+\alpha}(\overline{\Omega})$ and $C^{m+\alpha
,\frac{m+\alpha}{2}}(\overline{\Omega}_{T})$ of H\"{o}lder continuous
functions $u(x)$ and $u(x,t)$, where $m=0,1,2,...$, $\alpha\in(0,1)$, $\Omega$
is a given domain in $R^{N}$ (bounded or unbounded) with smooth boundary (of
the class $C^{m+\alpha}$), $\Omega_{T}=\Omega\times(0,T)$, $\overline{\Omega
}_{T}=\overline{\Omega}\times\lbrack0,T]$, $T>0$ is a given constant. The
norms in the space $C^{m+\alpha}(\overline{\Omega})$ is defined by%

\begin{equation}
\left\Vert u\right\Vert _{C^{m+\alpha}(\overline{\Omega})}\equiv
|u|_{\overline{\Omega}}^{(m+\alpha)}=|u|_{\overline{\Omega}}^{(m)}%
+\left\langle u\right\rangle _{\overline{\Omega}}^{(m+\alpha)}, \label{1.5}%
\end{equation}
where%

\begin{equation}
|u|_{\overline{\Omega}}^{(m)}\equiv{\sum\limits_{|\overline{\beta}|\leq m}%
}\max_{\overline{\Omega}}\left\vert D_{x}^{\overline{\beta}}u(x)\right\vert
,\quad\left\langle u\right\rangle _{\overline{\Omega}}^{(m+\alpha)}\equiv
{\sum\limits_{|\overline{\beta}|=m}}\left\langle D_{x}^{\overline{\beta}%
}u(x)\right\rangle _{\overline{\Omega}}^{(\alpha)}, \label{1.6}%
\end{equation}

\[
\left\langle w(x)\right\rangle _{\overline{\Omega}}^{(\alpha)}\equiv
\left\langle w(x)\right\rangle _{x,\overline{\Omega}}^{(\alpha)}\equiv
\sup_{x,y\in\overline{\Omega}}\frac{|w(x)-w(y)|}{|x-y|^{\alpha}},
\]
$\overline{\beta}=(\beta_{1},\beta_{2}, ..., \beta_{N})$ is a multiindex,
$\beta_{i}=0,1,2,...$, $|\overline{\beta}|=\beta_{1}+\beta_{2}+...+\beta_{N}$,%

\[
D_{x}^{\overline{\beta}}u=\frac{\partial^{\beta_{1}}\partial^{\beta_{2}%
}...\partial^{\beta_{N}}u(x)}{\partial x_{1}^{\beta_{1}}\partial x_{2}%
^{\beta_{2}}...\partial x_{3}^{\beta_{N}}}.
\]
Analogously, the norm in the space $C^{m+\alpha,\frac{m+\alpha}{2}}%
(\overline{\Omega}_{T})$ is defined by%

\begin{equation}
\left\Vert u\right\Vert _{C^{m+\alpha,\frac{m+\alpha}{2}}(\overline{\Omega
}_{T})}\equiv|u|_{\overline{\Omega}_{T}}^{(m+\alpha)}=|u|_{\overline{\Omega
}_{T}}^{(m)}+\left\langle u\right\rangle _{\overline{\Omega}_{T}}^{(m+\alpha
)}, \label{1.7}%
\end{equation}

\begin{equation}
|u|_{\overline{\Omega}_{T}}^{(m)}\equiv{\sum\limits_{|\overline{\beta}|+2l\leq
m}}\max_{\overline{\Omega}_{T}}\left\vert D_{t}^{l}D_{x}^{\overline{\beta}%
}u(x,t)\right\vert ,\quad\left\langle u\right\rangle _{\overline{\Omega}_{T}%
}^{(m+\alpha)}=\left\langle u\right\rangle _{x,\overline{\Omega}_{T}%
}^{(m+\alpha)}+\left\langle u\right\rangle _{t,\overline{\Omega}_{T}%
}^{(m+\alpha)}, \label{1.8}%
\end{equation}

\[
\left\langle u\right\rangle _{x,\overline{\Omega}_{T}}^{(m+\alpha)}\equiv
{\sum\limits_{0\leq m-|\beta|-2l\leq1}}\left\langle D_{t}^{l}D_{x}%
^{\overline{\beta}}u(x)\right\rangle _{x,\overline{\Omega}}^{(\alpha)},
\]

\begin{equation}
\left\langle u\right\rangle _{t,\overline{\Omega}_{T}}^{(m+\alpha)}\equiv
{\sum\limits_{0\leq m-|\beta|-2l\leq1}}\left\langle D_{t}^{l}D_{x}%
^{\overline{\beta}}u(x)\right\rangle _{t,\overline{\Omega}}^{(\frac
{m-|\beta|-2l+\alpha}{2})}. \label{1.9}%
\end{equation}
Note that for the space $C^{m+\alpha,\frac{m+\alpha}{2}}(\overline{\Omega}%
_{T})$ we also have the estimate (see, for example,\cite{5})%

\begin{equation}
\left\langle u\right\rangle _{t,\overline{\Omega}_{T}}^{(m+\alpha)}%
={\sum\limits_{0\leq m-|\beta|-2l\leq1}}\left\langle D_{t}^{l}D_{x}%
^{\overline{\beta}}u(x)\right\rangle _{t,\overline{\Omega}}^{(\frac
{m-|\beta|-2l+\alpha}{2})}\leq\label{1.10}%
\end{equation}

\[
\leq C\left(  \left\langle u\right\rangle _{x,\overline{\Omega}_{T}%
}^{(m+\alpha)}+\left\langle D_{t}^{[m/2]}u(x)\right\rangle _{t,\overline
{\Omega}}^{(\frac{m-2[m/2]+\alpha}{2})}\right)  ,
\]
where here and everywhere below we denote by $C$, $\nu$ all absolute constants
or constants depending on fixed data of the problem. It is known (see
\cite{Triebel}, \cite{5} , \cite{Gol18}) that the seminorm $\left\langle
u\right\rangle _{\overline{\Omega}_{T}}^{(m+\alpha)}$ is equivalent to the seminorm%

\[
\left\langle u\right\rangle _{\overline{\Omega}_{T}}^{(m+\alpha)}\simeq
C_{k,l}\left(  \sup_{\overline{h}\in R^{3};x,x+\overline{h}\in\overline
{\Omega};t\in\lbrack0,T]}\frac{|\Delta_{x,\overline{h}}^{k}u(x,t)|}%
{|\overline{h}|^{m+\alpha}}+\right.
\]

\begin{equation}
\left.  +\sup_{\Delta t>0;x\in\overline{\Omega};t\in\lbrack0,T]}\frac
{|\Delta_{t,\Delta t}^{l}u(x,t)|}{|\Delta t|^{\frac{m+\alpha}{2}}}\right)  .
\label{1.10.1}%
\end{equation}
Here $k$ , $l$ are some fixed integers such that $k>m+\alpha$, $l>\frac
{m+\alpha}{2}$, $\Delta_{x,\overline{h}}^{k}u=\Delta_{x,\overline{h}}%
(\Delta_{x,\overline{h}}^{k-1}u)$, $\Delta_{x,\overline{h}}u=u(x+\overline
{h},t)-u(x,t)$ and analogously $\Delta_{t,\Delta t}^{l}u=\Delta_{t,\Delta
t}(\Delta_{t,\Delta t}^{l-1}u)$, $\Delta_{t,\Delta t}u=u(x,t+\Delta t)-u(x,t)$.

The above relations can be written also in a more concise way. Denote
$\overline{H}=(\overline{h},\Delta t)$ ,
\[
\left\Vert \overline{H}\right\Vert \equiv|\overline{h}|+|\Delta t|^{\frac
{1}{2}},
\]
and denote $\Delta_{\overline{H}}u(x,t)=u(x+\overline{h},t+\Delta t)-u(x,t)$.
Then \eqref{1.10.1} is equivalent to%

\begin{equation}
\left\langle u\right\rangle _{\overline{\Omega}_{T}}^{(m+\alpha)}\simeq
C_{k}\sup_{(x,t),(x,t)+\overline{H}\in\overline{\Omega}_{T}}\frac
{|\Delta_{\overline{H}}^{k}u(x,t)|}{||\overline{H}||^{m+\alpha}}.
\label{1.10.2}%
\end{equation}

We also use for functions $u(x)$ or $u(x,t)$ spaces $L_{p}(\Omega)$ or
$L_{p}(\Omega_{T})$ respectively with the norms $\left\Vert u\right\Vert
_{p,\Omega}$ and $\left\Vert u\right\Vert _{p,\Omega_{T}}$ correspondingly,
$p>1$.

For the spaces $C^{m+\alpha}(\overline{\Omega})=C^{l}(\overline{\Omega})$ with
noninteger $l=m+\alpha$ and also with integer $l\geq0$ we have the following
interpolation inequalities (see, for example, \cite{6})%

\begin{equation}
|u|_{\overline{\Omega}}^{(l)}\leq C\left(  |u|_{\overline{\Omega}}^{(l_{2}%
)}\right)  ^{\omega}\left(  |u|_{\overline{\Omega}}^{(l_{1})}\right)
^{1-\omega},\quad\omega=\frac{l-l_{1}}{l_{2}-l_{1}},\quad l_{1}<l<l_{2},
\label{2.1}%
\end{equation}
and analogous inequalities for anisotropic spaces $C^{l,\frac{l}{2}}
(\overline{\Omega}_{T})$%

\begin{equation}
|u|_{\overline{\Omega}_{T}}^{(l)}\leq C\left(  |u|_{\overline{\Omega}_{T}%
}^{(l_{2})}\right)  ^{\omega}\left(  |u|_{\overline{\Omega}_{T}}^{(l_{1}%
)}\right)  ^{1-\omega},\quad\omega=\frac{l-l_{1}}{l_{2}-l_{1}},\quad
l_{1}<l<l_{2}, \label{2.2}%
\end{equation}
where $l_{1},l_{2}$ may be either integer or noninteger.

The further content of the paper is as follows. In the next section of the
paper we prove some interpolation inequalities for functions from H\"{o}lder
spaces (in the case of unbounded domain we need the intersection of H\"{o}lder
and Lesbesgue's spaces). And in the last third section we apply these
inequalities to a-priory estimates and solvability of a model (just for
simplicity) problem for partial differential equations.

\section{Interpolation inequalities.}

We start with the following interpolation inequality as the key particular case.

\begin{lemma}
\label{L2.1}

Let $l>0$ be noninteger and let $u(x,t)\in C^{l,\frac{l}{2}}(\overline{\Omega
}_{T})\cap L_{p}(\Omega_{T})$. Then%
\begin{equation}
|u|_{\overline{\Omega}_{T}}^{(0)}\leq C\left(  |u|_{\overline{\Omega}_{T}%
}^{(l)}\right)  ^{\omega}\left(  \left\Vert u\right\Vert _{p,\Omega_{T}%
}\right)  ^{1-\omega},\quad\omega=\frac{N+2}{lp+N+2}. \label{2.3.1}%
\end{equation}
\newline

If for the function $u(x,t)$ the following parabolic norm is finite%
\begin{equation}
\sup_{0<t<T}\left\Vert u(\cdot,t)\right\Vert _{p,\Omega}<\infty, \label{2.3.2}%
\end{equation}
then%
\begin{equation}
|u|_{\overline{\Omega}_{T}}^{(0)}\leq C\left(  |u|_{\overline{\Omega}_{T}%
}^{(l)}\right)  ^{\sigma}\left(  \sup_{0<t<T}\left\Vert u(\cdot,t)\right\Vert
_{p,\Omega}\right)  ^{1-\sigma},\quad\sigma=\frac{N}{lp+N}. \label{2.3.3}%
\end{equation}

\end{lemma}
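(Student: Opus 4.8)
The plan is to prove the pointwise bound by localizing around a point where $|u|$ is nearly maximal and playing the Hölder modulus of continuity against the $L_p$ mass. Fix $(x_0,t_0)\in\overline{\Omega}_T$ with $|u(x_0,t_0)|$ close to $M:=|u|_{\overline{\Omega}_T}^{(0)}$, and set $H:=|u|_{\overline{\Omega}_T}^{(l)}$ (so $\langle u\rangle^{(l)}\le H$ and also $M\le H$, though that last fact alone is too weak). On the parabolic cylinder $Q_\rho=\{(x,t): |x-x_0|<\rho,\ |t-t_0|<\rho^2\}\cap\overline{\Omega}_T$ the Hölder estimate gives $|u(x,t)-u(x_0,t_0)|\le C H\,\|(x-x_0,t-t_0)\|^{\min(l,1)}$ when $l\le 1$; for general noninteger $l$ one first replaces $u$ by a polynomial-difference quotient or simply uses the increment characterization \eqref{1.10.2}, but for the $C^{(0)}$ bound it is cleanest to use that a single first-order Hölder-type increment is controlled: if $l\ge 1$ one can bound $|u(x,t)-u(x_0,t_0)|$ using the gradient bound $|D_xu|,|D_tu|\le CH$ (valid since $|u|_{\overline{\Omega}_T}^{(l)}$ dominates all lower derivatives) so that in all cases $|u(x,t)-u(x_0,t_0)|\le C H\,\|(x-x_0,t-t_0)\|^{\gamma}$ with $\gamma=\min(l,1)$. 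Actually, to keep the exponent $\omega=\frac{N+2}{lp+N+2}$ sharp with the full power $l$, I would instead argue: for $\|(x-x_0,t-t_0)\|\le\rho$, $|u(x,t)|\ge |u(x_0,t_0)|-CH\rho^{l}\ge M/2 - CH\rho^{l}$ after choosing the near-max point; so one really does need the full $l$-th power, and this is obtained from \eqref{1.10.2} by a telescoping/summation argument expressing $u(x_0,t_0)$ in terms of $k$-th increments — or, most simply, by noting that a function with $\langle u\rangle^{(l)}\le H$ and $\|u\|_{C^{(0)}}\le M$ satisfies, on a ball of radius $\rho$ around a near-maximum, $|u|\ge M - CH\rho^{l}$ directly from the definition of the seminorm applied along a path (valid because $l$-Hölder seminorm controls the modulus of continuity $CH\rho^l$ for any $l>0$, integer or not, via \eqref{1.10.2} with difference order $k=1$ when $l<1$ and via Taylor expansion when $l\ge1$).

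Having the lower bound $|u(x,t)|\ge M - C H \rho^{l}$ on $Q_\rho$, choose $\rho$ so that $CH\rho^{l}=M/2$, i.e. $\rho=(M/(2CH))^{1/l}$, giving $|u|\ge M/2$ on $Q_\rho$. The parabolic cylinder $Q_\rho$ intersected with $\overline{\Omega}_T$ has measure $\ge c\,\rho^{N+2}$ — here one uses that $\Omega$ has $C^{l}$ (in particular Lipschitz) boundary, so a fixed fraction of the ball about any boundary point lies inside $\Omega$; this gives a cone/corkscrew condition ensuring $|Q_\rho\cap\Omega_T|\ge c\rho^{N+2}$ uniformly, and if $\rho$ exceeds the diameter or $T$ one simply caps it, which only improves the inequality. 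Then
\begin{equation}
\|u\|_{p,\Omega_T}^{p}\ \ge\ \int_{Q_\rho\cap\Omega_T}|u|^{p}\,dx\,dt\ \ge\ (M/2)^{p}\,c\,\rho^{N+2}\ =\ c'\,M^{p}\,(M/H)^{(N+2)/l}. \label{planeq}
\end{equation}
Solving \eqref{planeq} for $M$ gives $M^{p+(N+2)/l}\le C\|u\|_{p,\Omega_T}^{p}\,H^{(N+2)/l}$, hence $M\le C\,H^{\omega}\,\|u\|_{p,\Omega_T}^{1-\omega}$ with $\omega=\frac{(N+2)/l}{p+(N+2)/l}=\frac{N+2}{lp+N+2}$, which is exactly \eqref{2.3.1}. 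For \eqref{2.3.3} under the time-uniform hypothesis \eqref{2.3.2}, one repeats the argument but integrates only in $x$ over the spatial ball $B_\rho(x_0)$ at the single time $t=t_0$ (or over the set of $t$ within $\rho^2$ of $t_0$, using that $\|u(\cdot,t)\|_{p,\Omega}\le\sup$): $\sup_t\|u(\cdot,t)\|_{p,\Omega}^p\ge\|u(\cdot,t_0)\|_{p,\Omega}^p\ge (M/2)^p\,c\rho^{N}$, which after the same optimization yields $\sigma=\frac{N}{lp+N}$.

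I expect the main obstacle to be the rigorous passage from the seminorm $|u|_{\overline{\Omega}_T}^{(l)}$ to the clean modulus-of-continuity estimate $|u(x,t)-u(x_0,t_0)|\le C H\|(x-x_0,t-t_0)\|^{l}$ with the \emph{full} exponent $l$ rather than $\min(l,1)$, which is false for a single point evaluation when $l>1$ — the correct statement involves higher-order increments, \eqref{1.10.2}. The fix is to not demand $|u|\ge M/2$ from a single increment but to use the following: pick the near-maximum point $(x_0,t_0)$, and observe that for the \emph{first-order} increment $|u(x,t)-u(x_0,t_0)|$ we have the bound $CH\|(x-x_0,t-t_0)\|^{\min(l,1)}$ plus lower-order Taylor terms with coefficients bounded by $|u|^{(l)}_{\overline{\Omega}_T}$; but then the relevant scale is $\rho^{\min(l,1)}$, not $\rho^l$, giving a worse exponent. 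To recover the sharp $\omega$ one must instead bound $\|u\|_{p,\Omega_T}$ from below more cleverly, integrating $|u(x_0,t_0)+\text{(correction)}|^p$ and using that near a maximum of a $C^l$ function the Taylor remainder is $O(H\rho^l)$ genuinely — i.e. one uses that at an interior near-maximum the first derivatives are small, so the dominant term in the Taylor expansion after the constant is the $l$-th order remainder. Handling near-\emph{boundary} maxima (where derivatives need not vanish) requires instead choosing $\rho$ small relative to where the function drops, and the honest route is the increment characterization \eqref{1.10.2}: write $u(x_0,t_0)$ via a telescoping sum of $2^{-j}\rho$-scale first increments, each bounded by $CH(2^{-j}\rho)^{\cdots}$, summing to $CH\rho^l$ when $l$ is split as $l = (k-1)\cdot 1 + (\text{fractional part handled by }k\text{-th increment})$ — this is the one genuinely technical point, and everything else is the elementary volume computation and optimization above.
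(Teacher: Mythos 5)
Your overall strategy (localize near a point where $|u|$ is within a fixed fraction of $M=|u|^{(0)}_{\overline\Omega_T}$, bound the measure of the set where $|u|\ge M/2$ from below, insert into $\Vert u\Vert_{p,\Omega_T}^p$, optimize) can be made to work, but the one step that carries the sharp exponent — the descent estimate $|u(x,t)|\ge M-CH\Vert(x-x_0,t-t_0)\Vert^{l}$ with the \emph{full} power $l$, $H=|u|^{(l)}_{\overline\Omega_T}$ — is not established, and none of the three justifications you offer survives scrutiny. The pointwise modulus bound $|u(x,t)-u(x_0,t_0)|\le CH\Vert(x-x_0,t-t_0)\Vert^{l}$ is simply false for $l>1$: take $u(x,t)=x_1$ on a bounded cylinder, whose homogeneous seminorm (computed with $k$-th differences, $k\ge2$, as in \eqref{1.10.2}) vanishes and whose full norm $H$ is of order one, while the first-order increment is $|x_1-x_{0,1}|$, which beats $CH\Vert\cdot\Vert^{l}$ at small scales; consequently the proposed "telescoping of first-order increments summing to $CH\rho^{l}$" cannot be made rigorous, because it would prove exactly this false statement. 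The "interior near-maximum" fix also fails: near-maximality gives no quantitative smallness of the gradient, at a boundary (or $t=0$, $t=T$) maximum the gradient need not be small at all, and even at an exact interior maximum only the first derivatives vanish, so for $l>2$ the drop over distance $\rho$ is governed by $|D_x^2u|\rho^{2}\sim H\rho^{2}$, not $H\rho^{l}$. With only the honest modulus $\min(l,1)$ (or $\min(l,2)$) your choice of $\rho$ degrades and the optimization returns an exponent strictly worse than $\omega=\frac{N+2}{lp+N+2}$ — the very defect you flag but do not repair.

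The missing ingredient is not a pointwise $\rho^{l}$ modulus but the known interpolation \emph{within} the Hölder scale, inequality \eqref{2.2}: it gives $|\nabla_xu|\le CH^{1/l}M^{1-1/l}$ and $|\partial_tu|\le CH^{2/l}M^{1-2/l}$ (with Hölder seminorms in place of derivatives when $l<1$ or $l<2$), so the drop of $|u|$ over a parabolic cylinder of spatial radius $\rho$ and height $\rho^{2}$ is at most $C\bigl(H^{1/l}M^{1-1/l}\rho+H^{2/l}M^{1-2/l}\rho^{2}\bigr)$, which is $\le M/2$ precisely when $\rho\le c(M/H)^{1/l}$; this restores your measure bound $c\rho^{N+2}$ (resp. $c\rho^{N}$ for \eqref{2.3.3}) and the sharp $\omega$ and $\sigma$, with the boundary/capping issues reduced to a corkscrew constant. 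Note that the paper's proof sidesteps all of this: it writes $u(x,t)$ through the $k$-th difference identity underlying \eqref{1.10.2}, so that $|u(x,t)|^{p}\le C\langle u\rangle^{p}\Vert\overline H\Vert^{pl}+C\sum_i|u|^{p}$ at shifted points, integrates in the increment variable $(y,\tau)$ over a cylinder of measure $\sim\varepsilon^{N+2}$ (which converts the shifted values directly into $\Vert u\Vert_{p}$), and then optimizes the additive inequality $M\le C\varepsilon^{l}H+C\varepsilon^{-(N+2)/p}\Vert u\Vert_{p}$ in $\varepsilon$ — no near-maximal point, no level sets, and no modulus of continuity of order $l$ are ever required.
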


\begin{proof}
We are going to use relation \eqref{1.10.2}. Let first $\Omega=R^{N}$,
$T=\infty$. Let $x,y\in R^{N}$, $x$ is fixed, $t,\tau>0$, $t$ is fixed,
$\overline{h}=y-x$, $\Delta t=t-\tau$, $\overline{H}=(\overline{h},\Delta t)$,
$k$ is an integer, $k>l$, $\varepsilon>0$. Represent $u(x,t)$ in the form%

\begin{equation}
u(x,t)=\Delta_{\overline{H}}^{k}u+{\sum\limits_{i=1}^{k}}c_{j}u(x+i\overline
{h},t+i\Delta t)=\frac{\Delta_{\overline{H}}^{k}u}{||\overline{H}||^{l}%
}||\overline{H}||^{l}+{\sum\limits_{i=1}^{k}}c_{j}u(x+i\overline{h},t+i\Delta
t). \label{2.4}%
\end{equation}
From this we obtain%

\begin{equation}
|u(x,t)|\leq C\left\langle u\right\rangle _{R^{N}\times R_{+}^{1}}%
^{(l)}||\overline{H}||^{l}+{\sum\limits_{i=1}^{k}}c_{i}|u(x+i\overline
{h},t+i\Delta t)| \label{2.5}%
\end{equation}
with some constants $c_{i}$, $R_{+}^{1}=\{t\geq0\}$.

Raising this inequality to the power $p>1$ and applying elementary estimates,
we obtain%
\begin{equation}
|u(x,t)|^{p}\leq C\left(  \left\langle u\right\rangle _{R^{N}\times R_{+}^{1}%
}^{(l)}\right)  ^{p}||\overline{H}||^{pl}+C{\sum\limits_{i=1}^{k}}\left\vert
u\right\vert ^{p}(x+i\overline{h},t+i\Delta t). \label{2.6}%
\end{equation}
Integrating in $(y,\tau)$ over the cylinder $Q_{\varepsilon}(x,t)=\{(y,\tau
):|y-x|\leq\varepsilon,t\leq\tau\leq t+\varepsilon^{2}\}$, we get%

\begin{equation}
C\varepsilon^{N+2}|u(x,t)|^{p}\leq C\left(  \left\langle u\right\rangle
_{R^{N}\times R_{+}^{1}}^{(l)}\right)  ^{p}\varepsilon^{N+2+pl}+C_{i}%
{\sum\limits_{i=1}^{k}}{\int\limits_{Q_{i\varepsilon}(x,t)}}\left\vert
u\right\vert ^{p}(z,\theta)dzd\theta, \label{2.7}%
\end{equation}
where for each $i$ in the sum we made the change of the variables
$z=x+i(y-x)$, $\theta=t+i(\tau-t)$ and we took into account that for each $i$
we have $(y-x)=(z-x)/i$. If the norm in \eqref{2.3.2} is finite, we can
integrate just in $y$ over the ball $B_{\varepsilon}(x,t)=\{y:|y-x|\leq
\varepsilon\}$ and obtain%

\begin{equation}
C\varepsilon^{N}|u(x,t)|^{p}\leq C\left(  \left\langle u\right\rangle
_{R^{N}\times R_{+}^{1}}^{(l)}\right)  ^{p}\varepsilon^{N+pl}+C_{i}%
{\sum\limits_{i=1}^{k}}{\int\limits_{B_{i\varepsilon}(x,t+i\Delta t)}%
}\left\vert u\right\vert ^{p}(z,t+i\Delta t)dz, \label{2.7.1}%
\end{equation}

Estimate now the integrals over $Q_{i\varepsilon}(x,t)$ in \eqref{2.7} by the
integral over $R^{N}\times R_{+}^{1}$, divide both sides of \eqref{2.7} by
$C\varepsilon^{N+2}$ and take the roots of power $p$ from the terms of this
relation. As a result we obtain%

\[
|u(x,t)|\leq C\varepsilon^{l}\left\langle u\right\rangle _{R^{N}\times
R_{+}^{1}}^{(l)}+C\varepsilon^{-\frac{N+2}{p}}\left\Vert u\right\Vert
_{p,R^{N}\times R_{+}^{1}},
\]
or, taking supremum over $(x,t)\in R^{N}\times R_{+}^{1}$,%

\begin{equation}
|u|_{R^{N}\times R_{+}^{1}}^{(0)}\leq C\varepsilon^{l}\left\langle
u\right\rangle _{R^{N}\times R_{+}^{1}}^{(l)}+C\varepsilon^{-\frac{N+2}{p}%
}\left\Vert u\right\Vert _{p,R^{N}\times R_{+}^{1}}. \label{2.8}%
\end{equation}
Optimizing this inequality with respect to $\varepsilon>0$, or just taking
\[
\varepsilon=\left(  \left\Vert u\right\Vert _{p,R^{N}\times R_{+}^{1}%
}/\left\langle u\right\rangle _{R^{N}\times R_{+}^{1}}^{(l)}\right)
^{\frac{p}{pl+N+2}}%
\]
with $\left\langle u\right\rangle _{R^{N}\times R_{+}^{1}}^{(l)}\neq0$, we
finally obtain%

\begin{equation}
|u|_{R^{N}\times R_{+}^{1}}^{(0)}\leq C\left(  \left\langle u\right\rangle
_{R^{N}\times R_{+}^{1}}^{(l)}\right)  ^{\frac{N+2}{pl+N+2}}\left(  \left\Vert
u\right\Vert _{p,R^{N}\times R_{+}^{1}}\right)  ^{\frac{pl}{pl+N+2}}
\label{2.9}%
\end{equation}
that is exactly inequality \eqref{2.3.1}. If now $\left\langle u\right\rangle
_{R^{N}\times R_{+}^{1}}^{(l)}=0$ then from \eqref{2.8} after letting
$\varepsilon\rightarrow\infty$ it follows that $|u|_{R^{N}\times R_{+}^{1}%
}^{(0)}=0$ and so \eqref{2.9} is valid in this case also.

If the norm in \eqref{2.3.2} is finite, completely analogously to \eqref{2.9},
we have subsequently from \eqref{2.7.1}%
\[
|u(x,t)|\leq C\varepsilon^{l}\left\langle u\right\rangle _{R^{N}\times
R_{+}^{1}}^{(l)}+C\varepsilon^{-\frac{N}{p}}\sup_{t\in R_{+}^{1}}\left\Vert
u(\cdot,t)\right\Vert _{p,R^{N}},
\]%
\[
|u|_{R^{N}\times R_{+}^{1}}^{(0)}\leq C\varepsilon^{l}\left\langle
u\right\rangle _{R^{N}\times R_{+}^{1}}^{(l)}+C\varepsilon^{-\frac{N}{p}}%
\sup_{t\in R_{+}^{1}}\left\Vert u(\cdot,t)\right\Vert _{p,R^{N}},
\]
and, optimizing this inequality with respect to $\varepsilon>0$,
\[
|u|_{R^{N}\times R_{+}^{1}}^{(0)}\leq C\left(  \left\langle u\right\rangle
_{R^{N}\times R_{+}^{1}}^{(l)}\right)  ^{\frac{N}{pl+N}}\left(  \sup_{t\in
R_{+}^{1}}\left\Vert u(\cdot,t)\right\Vert _{p,R^{N}}\right)  ^{\frac
{pl}{pl+N}}%
\]
that is exactly inequality \eqref{2.3.3}.

Now in the case of general smooth domain $\Omega\neq R^{N}$ and $T<\infty$ the
lemma follows by an extension of a given function to $R^{N}\times R_{+}^{1}$
with the preserving of the corresponding norms.The lemma is proved.
\end{proof}

Now we can easily get the following more general assertion.

\begin{theorem}
\label{L2.2} Let $l_{1}$ be any positive number and let $l_{2}>l_{1}$ be a
positive noninteger. Let also $u(x,t)\in C^{l_{2},\frac{l_{2}}{2}}%
(\overline{\Omega}_{T})\cap L_{p}(\Omega_{T})$. Then%
\begin{equation}
|u|_{\overline{\Omega}_{T}}^{(l_{1})}\leq C\left(  |u|_{\overline{\Omega}_{T}%
}^{(l_{2})}\right)  ^{\omega}\left(  ||u||_{p,\overline{\Omega}_{T}}\right)
^{1-\omega},\quad\omega=\frac{pl_{1}+N+2}{pl_{2}+N+2}. \label{2.10}%
\end{equation}

If the parabolic norm%
\[
\sup_{0<t<T}\left\Vert u(\cdot,t)\right\Vert _{p,\Omega}<\infty
\]
is finite, then%
\begin{equation}
|u|_{\overline{\Omega}_{T}}^{(l_{1})}\leq C\left(  |u|_{\overline{\Omega}_{T}%
}^{(l_{2})}\right)  ^{\sigma}\left(  \sup_{0<t<T}\left\Vert u(\cdot
,t)\right\Vert _{p,\Omega}\right)  ^{1-\sigma},\quad\sigma=\frac{pl_{1}%
+N}{pl_{2}+N}. \label{2.10.1}%
\end{equation}

\end{theorem}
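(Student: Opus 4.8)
The plan is to reduce the general statement to Lemma \ref{L2.1} by applying that lemma to the derivatives of $u$ and then combining the resulting estimates via the classical Hölder-space interpolation inequality \eqref{2.2}. First I would note that, by \eqref{1.5}--\eqref{1.9}, the norm $|u|_{\overline{\Omega}_T}^{(l_1)}$ is a sum of $\sup$-norms of expressions $D_t^l D_x^{\overline\beta} u$ together with Hölder seminorms of the top-order such expressions; writing $l_1 = m_1 + \alpha_1$ (or treating the integer case directly) it suffices to bound each summand $|D_t^l D_x^{\overline\beta} u|^{(0)}_{\overline\Omega_T}$ for $2l+|\overline\beta|\le m_1$, and each seminorm $\langle D_t^l D_x^{\overline\beta} u\rangle^{(\alpha_1)}$ of order $2l+|\overline\beta| = m_1$, by the right-hand side of \eqref{2.10}.

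For a fixed such derivative $v = D_t^l D_x^{\overline\beta} u$, I would apply Lemma \ref{L2.1} with the exponent $l_2 - (2l+|\overline\beta|) > 0$ in place of $l$ there (this is noninteger since $l_2$ is noninteger and $2l+|\overline\beta|$ is an integer, and it is positive whenever the corresponding term appears in $|u|^{(l_1)}_{\overline\Omega_T}$, because $2l+|\overline\beta|\le m_1 < l_2$), obtaining
\begin{equation}
|v|_{\overline\Omega_T}^{(0)} \le C\bigl(|v|_{\overline\Omega_T}^{(l_2-2l-|\overline\beta|)}\bigr)^{\omega'}\bigl(\|v\|_{p,\Omega_T}\bigr)^{1-\omega'},\qquad \omega' = \frac{N+2}{p(l_2-2l-|\overline\beta|)+N+2}. \label{proofplan-step}
\end{equation}
Now $|v|_{\overline\Omega_T}^{(l_2-2l-|\overline\beta|)} \le C|u|_{\overline\Omega_T}^{(l_2)}$ by definition of the Hölder norm, and $\|v\|_{p,\Omega_T} = \|D_t^l D_x^{\overline\beta} u\|_{p,\Omega_T}$, which I would in turn interpolate: by the standard interpolation inequality \eqref{2.2} applied in the scale $C^{0}\subset C^{2l+|\overline\beta|}\subset C^{l_2}$ together with a bound on the $L_p$ norm of $u$ in terms of $\|u\|_{p,\Omega_T}$, one gets $\|D_t^l D_x^{\overline\beta} u\|_{p,\Omega_T}\le C(|u|_{\overline\Omega_T}^{(l_2)})^{\theta}(\|u\|_{p,\Omega_T})^{1-\theta}$ for a suitable $\theta$. (Alternatively, and perhaps more cleanly, one bounds the $L_p$ norm of the derivative by an intermediate-derivative interpolation and feeds the pieces together.) Substituting these into \eqref{proofplan-step} and simplifying the exponents, the product of powers collapses to $\bigl(|u|_{\overline\Omega_T}^{(l_2)}\bigr)^{\omega}\bigl(\|u\|_{p,\Omega_T}\bigr)^{1-\omega}$ with exactly $\omega = \frac{pl_1+N+2}{pl_2+N+2}$; here one must use the homogeneity/dimensional bookkeeping (each $x$-derivative costs one unit, each $t$-derivative two units, matching the way $N+2$ enters) to check the arithmetic works out uniformly over all the summands.

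The same argument with $\varepsilon^{N}$ in place of $\varepsilon^{N+2}$ — i.e. using the second half of Lemma \ref{L2.1} — gives \eqref{2.10.1}, provided one is careful that applying a time derivative to $u$ is compatible with the hypothesis $\sup_t\|u(\cdot,t)\|_{p,\Omega}<\infty$; in practice only $x$-derivatives need the $L_p(\Omega)$-in-$x$ bound while mixed space-time derivatives are handled by the Hölder-norm interpolation \eqref{2.2}, so the structure goes through. Finally, the general smooth domain $\Omega\ne R^N$ and finite $T$ are reduced to $R^N\times R_+^1$ by a norm-preserving extension, exactly as at the end of the proof of Lemma \ref{L2.1}. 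The main obstacle I anticipate is purely bookkeeping: verifying that the several exponents produced by (i) Lemma \ref{L2.1} applied to each derivative, (ii) the trivial domination of the intermediate Hölder seminorm by $|u|^{(l_2)}$, and (iii) the classical interpolation \eqref{2.2} for the $L_p$ norms of derivatives, all multiply together to yield the single clean exponent $\omega=\frac{pl_1+N+2}{pl_2+N+2}$ independently of which summand of $|u|^{(l_1)}_{\overline\Omega_T}$ is being estimated; there is no serious analytic difficulty beyond Lemma \ref{L2.1} itself.
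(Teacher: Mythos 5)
Your plan has a genuine gap at its central step. After applying Lemma \ref{L2.1} to a derivative $v=D_t^lD_x^{\overline\beta}u$, you need the bound $\|D_t^lD_x^{\overline\beta}u\|_{p,\Omega_T}\le C\bigl(|u|_{\overline\Omega_T}^{(l_2)}\bigr)^{\theta}\bigl(\|u\|_{p,\Omega_T}\bigr)^{1-\theta}$, and you propose to get it from \eqref{2.2}. But \eqref{2.2} interpolates only within the H\"older scale; it says nothing about $L_p$ norms of derivatives. The inequality you need is a Gagliardo--Nirenberg-type cross-scale estimate ($L_p$ of an intermediate derivative against the $C^{l_2}$ norm and the $L_p$ norm of $u$), which is nowhere proved in the paper and is essentially of the same depth as the theorem you are trying to establish -- so as written the argument is circular in spirit and incomplete in fact. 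The parenthetical ``alternatively, one bounds the $L_p$ norm of the derivative by an intermediate-derivative interpolation'' does not repair this, since no such $L_p$ interpolation for derivatives is among the available tools. A secondary, patchable issue: Lemma \ref{L2.1} controls sup-norms, so your plan for the top-order H\"older seminorms $\langle D_t^lD_x^{\overline\beta}u\rangle^{(\alpha_1)}$ is not actually spelled out; you would need an extra within-scale interpolation there as well.

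The paper avoids all of this by not differentiating $u$ at all: it applies \eqref{2.2} once to the full norm, $|u|_{\overline\Omega_T}^{(l_1)}\le C\bigl(|u|_{\overline\Omega_T}^{(l_2)}\bigr)^{l_1/l_2}\bigl(|u|_{\overline\Omega_T}^{(0)}\bigr)^{(l_2-l_1)/l_2}$, and then substitutes Lemma \ref{L2.1} with $l=l_2$ for the factor $|u|_{\overline\Omega_T}^{(0)}$; the exponents multiply out directly to $\omega=\frac{pl_1+N+2}{pl_2+N+2}$, and the same two lines with \eqref{2.3.3} give \eqref{2.10.1}. If you reorganize your argument so that the only quantity fed into Lemma \ref{L2.1} is $u$ itself (never a derivative), your bookkeeping concerns disappear and you recover exactly the paper's proof.
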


\begin{proof}
From \eqref{2.2} we have%
\[
|u|_{\overline{\Omega}_{T}}^{(l_{1})}\leq C\left(  |u|_{\overline{\Omega}_{T}%
}^{(l_{2})}\right)  ^{\frac{l_{1}}{l_{2}}}\left(  |u|_{\overline{\Omega}_{T}%
}^{(0)}\right)  ^{\frac{l_{2}-l_{1}}{l_{2}}}.
\]
Substituting in this inequality estimate \eqref{2.3.1} for $|u|_{\overline
{\Omega}_{T}}^{(0)}$ with $l=l_{2}$, we obtain%

\[
|u|_{\overline{\Omega}_{T}}^{(l_{1})}\leq C\left(  |u|_{\overline{\Omega}_{T}%
}^{(l_{2})}\right)  ^{\frac{l_{1}}{l_{2}}}\left[  \left(  |u|_{R^{N}\times
R_{+}^{1}}^{(l_{2})}\right)  ^{\frac{N+2}{pl_{2}+N+2}}\left(  \left\Vert
u\right\Vert _{R^{N}\times R_{+}^{1}}\right)  ^{\frac{pl_{2}}{pl_{2}+N+2}%
}\right]  ^{\frac{l_{2}-l_{1}}{l_{2}}}=
\]

\[
=C\left(  |u|_{\overline{\Omega}_{T}}^{(l_{2})}\right)  ^{\frac{pl_{1}%
+N+2}{pl_{2}+N+2}}\left(  \left\Vert u\right\Vert _{R^{N}\times R_{+}^{1}%
}\right)  ^{\frac{p(l_{2}-l_{1})}{pl_{2}+N+2}}%
\]
that is \eqref{2.10}. Inequality \eqref{2.10.1} is completely analogous on the
base of \eqref{2.3.3}. The Theorem is proved.
\end{proof}

By exactly the same arguments we have also an assertion for isotropic
H\"{o}lder spaces in "elliptic" case.

\begin{theorem}
Let $l_{1}$ be any positive number and let $l_{2}>l_{1}$ be a positive
noninteger. Let also $u(x)\in C^{l_{2}}(\overline{\Omega})\cap L_{p}(\Omega)$.
Then%
\begin{equation}
|u|_{\overline{\Omega}}^{(l_{1})}\leq C\left(  |u|_{\overline{\Omega}}%
^{(l_{2})}\right)  ^{\omega}\left(  ||u||_{p,\overline{\Omega}}\right)
^{1-\omega},\quad\omega=\frac{pl_{1}+N}{pl_{2}+N}. \label{2.11}%
\end{equation}

\end{theorem}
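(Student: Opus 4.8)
The plan is to repeat, in the isotropic stationary setting, the two-step scheme used for Theorem~\ref{L2.2}: first prove the key case $l_1=0$ (the elliptic analogue of Lemma~\ref{L2.1}), and then bootstrap to arbitrary $l_1>0$ by means of the classical Hölder interpolation inequality \eqref{2.1}.

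For the case $l_1=0$ I would first reduce to $\Omega=R^{N}$ by extending $u$ from $\overline{\Omega}$ to all of $R^{N}$ with simultaneous control $|u|_{R^{N}}^{(l_2)}\le C|u|_{\overline{\Omega}}^{(l_2)}$ and $\|u\|_{p,R^{N}}\le C\|u\|_{p,\Omega}$; such an extension exists for domains with $C^{l_2}$ boundary. On $R^{N}$ one uses the finite–difference characterization of the Hölder seminorm analogous to \eqref{1.10.2}, namely $\langle u\rangle_{R^{N}}^{(l)}\simeq C_k\sup_{x,h}|\Delta_h^k u(x)|/|h|^{l}$ for a fixed integer $k>l$. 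Fixing $x$ and writing $u(x)=\Delta_h^k u(x)+\sum_{i=1}^{k}c_i u(x+ih)$ gives the pointwise bound $|u(x)|\le C\langle u\rangle_{R^{N}}^{(l)}|h|^{l}+\sum_{i=1}^{k}c_i|u(x+ih)|$. Raising to the power $p$, integrating in $h$ over $\{|h|\le\varepsilon\}$, changing variables $z=x+ih$ in the $i$-th term of the sum (so that the integral is over $\{|z-x|\le i\varepsilon\}$ with Jacobian $i^{-N}$), bounding each such integral by $\|u\|_{p,R^{N}}^{p}$ and dividing by $C\varepsilon^{N}$, then taking $p$-th roots, yields
\begin{equation*}
|u(x)|\le C\varepsilon^{l}\langle u\rangle_{R^{N}}^{(l)}+C\varepsilon^{-N/p}\|u\|_{p,R^{N}}.
\end{equation*}
Taking the supremum in $x$ and optimizing in $\varepsilon>0$ (choosing $\varepsilon=(\|u\|_{p,R^{N}}/\langle u\rangle_{R^{N}}^{(l)})^{p/(pl+N)}$ when $\langle u\rangle_{R^{N}}^{(l)}\ne0$, and letting $\varepsilon\to\infty$ in the degenerate case $\langle u\rangle_{R^{N}}^{(l)}=0$) gives
\begin{equation*}
|u|_{\overline{\Omega}}^{(0)}\le |u|_{R^{N}}^{(0)}\le C\bigl(|u|_{\overline{\Omega}}^{(l)}\bigr)^{N/(pl+N)}\bigl(\|u\|_{p,\Omega}\bigr)^{pl/(pl+N)},
\end{equation*}
which is the isotropic stationary analogue of \eqref{2.3.1}.

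For general $l_1>0$, apply \eqref{2.1} with lower endpoint $0$, middle value $l_1$ and upper endpoint $l_2$, which is legitimate since $l_2$ is noninteger:
\begin{equation*}
|u|_{\overline{\Omega}}^{(l_1)}\le C\bigl(|u|_{\overline{\Omega}}^{(l_2)}\bigr)^{l_1/l_2}\bigl(|u|_{\overline{\Omega}}^{(0)}\bigr)^{(l_2-l_1)/l_2}.
\end{equation*}
Substituting the bound from the previous step with $l=l_2$ and collecting exponents: the exponent of $|u|_{\overline{\Omega}}^{(l_2)}$ becomes $\tfrac{l_1}{l_2}+\tfrac{N}{pl_2+N}\cdot\tfrac{l_2-l_1}{l_2}=\tfrac{pl_1+N}{pl_2+N}=\omega$, and the exponent of $\|u\|_{p,\Omega}$ becomes $\tfrac{pl_2}{pl_2+N}\cdot\tfrac{l_2-l_1}{l_2}=\tfrac{p(l_2-l_1)}{pl_2+N}=1-\omega$, which is precisely \eqref{2.11}. (No restriction on $l_1$ beyond positivity is needed, since \eqref{2.1} is stated also for integer values of the interior exponent.)

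The only step that is not a verbatim transcription of the parabolic argument — with the cylinder $Q_\varepsilon$ replaced by the Euclidean ball and the homogeneity exponent $N+2$ replaced by $N$ — is the reduction to $\Omega=R^{N}$: one needs an extension operator bounded simultaneously $C^{l_2}(\overline{\Omega})\to C^{l_2}(R^{N})$ and $L_p(\Omega)\to L_p(R^{N})$. For a bounded domain with $C^{l_2}$ boundary this is the standard Whitney/Stein-type extension, and for unbounded $\Omega$ one argues locally after flattening the boundary, exactly as indicated at the end of the proof of Lemma~\ref{L2.1}. Everything else is routine.
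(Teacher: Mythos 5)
Your proposal is correct and follows exactly the route the paper intends: the paper proves this theorem by remarking it follows ``by exactly the same arguments'' as Lemma~\ref{L2.1} and Theorem~\ref{L2.2}, i.e.\ the finite-difference representation integrated over a ball instead of a parabolic cylinder (with homogeneity $N$ in place of $N+2$), optimization in $\varepsilon$, extension to $R^{N}$, and then interpolation via \eqref{2.1} with endpoints $0$ and $l_{2}$ --- which is precisely what you wrote, with the exponents checked correctly.
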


In the next section we give some simple application to an initial boundary
value problem for a quasilinear parabolic equation.

\end{document}